\newtheorem{theo}{Theorem}[section]
\newtheorem{lemma}[theo]{Lemma}
\newtheorem{proposition}[theo]{Proposition}
\newtheorem{cor}[theo]{Corollary}
\theoremstyle{definition}
\theoremstyle{remark}
\newtheorem{remark}[theo]{Remark}
\begin{document}

\vspace{0.5in}

\title{ Itzkowitz's problem for groups 
of finite exponent}

\author{A. Bareche}
\address{Universit\'e de Rouen, D\'epartement de Math\'ematiques,
UMR CNRS 6085, Avenue de l'Universit\'e, BP.12, F76801
Saint-\'Etienne-du-Rouvray, France}
\email{aicha.bareche@etu.univ-rouen.fr}

\author{A. Bouziad}
\address{Universit\'e de Rouen, D\'epartement de Math\'ematiques,
UMR CNRS 6085, Avenue de l'Universit\'e, BP.12, F76801
Saint-\'Etienne-du-Rouvray, France
}
\email{ahmed.bouziad@univ-rouen.fr}

\subjclass[2000]{Primary 22A05; 54E15; Secondary  22A10}

\keywords{Topological group; Left (right) uniform structure; Left (right)
uniformly continuous bounded real-valued function; SIN-group; FSIN-group; Left
(right) thin subset; Left (right) neutral subset; Left (right) uniformly
discrete subset, Group of
finite exponent}

\begin{abstract} Itzkowitz's problem asks whether every topological group $G$ has equal left and right uniform structures provided that 
bounded left uniformly continuous real-valued function on $G$
are right uniformly continuous. This paper provides a positive answer to this problem
if $G$ is of bounded exponent or, more generally,
if there exist an integer  $p\geq 2$ and a nonempty
open set $U\subset G$ such that the power map $U\ni g\to g^p\in G$
is left (or right) uniformly continuous. This also resolves the problem
for periodic  groups which are Baire spaces. 
\end{abstract}

\maketitle

\section{Introduction}
Let  $G$ be a Hausdorff topological group, $e$ its unit and  ${\mathcal{N}}(e)$ 
the set of all neighborhoods  of $e$ in $G$. The left uniformity ${\mathcal U}_l$ of $G$ has
as a basis of entourages the sets of the form $\{(x,y)\in G\times G: x^{-1}y\in V\}$,
where $V$ is a member of ${\mathcal N}(e)$ (we write the law of $G$ multiplicatively).
The right uniformity ${\mathcal U}_r$ is obtained by writing $xy^{-1}$ in place
of $x^{-1}y$ (in the form above). The group
$G$ is said to be balanced, or SIN (for small invariant neighborhoods), if  the
two uniformities ${\mathcal U}_l$ and ${\mathcal U}_r$ coincide. As it is well
known,
 compact groups and (obviously)  Abelian topological groups are SIN. A
topological
group $G$ is called functionally balanced or FSIN, if every
bounded left uniformly continuous real-valued function
on $G$ is right uniformly continuous. Here, a function $f: G\to\mathbb R$
is left uniformly continuous if $f$ is uniformly continuous
when $G$ is equipped with its left uniformity and the reals
with the usual metric. Right uniform continuity is defined similarly. Since
the inversion on $G$ switches the left and right uniformities, the
alternative right-left definition in the FSIN property leads to the same
thing. \par 
Obviously,
every SIN group is an FSIN group, but it is still unknown if  the converse holds for every
topological group. This problem, called Itzkowitz's problem after the work
\cite{IT1},
has received several positive answers in the past; the reader is referred to
\cite{BT1} and the references therein for more information
and related questions about the problem. Let us just recall that it  has received the attention
of many authors and  has been solved for two
notably different classes including respectively  locally compact groups and
locally connected groups (see for instance \cite{BT2, HER, IT2, MIL,PR1}).
It is clear that if $G$ is locally compact
then for every positive integer $p$, the power map $\phi_p$, defined by $G\ni
x\to x^p\in G$,
is left uniformly
continuous ``locally'', that is, when it is restricted to some neighborhood of the unit of $G$ (just
take a compact neighborhood) and $G$ is equipped with the left uniformity. In
particular, the class of groups for
which there
is $p\geq 2$ such that the power map $\phi_p$ is ``locally'' left uniformly continuous, 
includes all locally compact groups. It  includes also all groups of finite exponent; where  a group
$G$ is said to be of finite exponent if for some $p\geq 1$, $x^p=e$ for all $x\in G$.
In connection with this, it should be noted, as it is easy to see, that
a group $G$ is SIN if and only if the power map $\phi_2$ is left uniformly
continuous. This should
be compared to the well-known fact that $G$ is SIN if and only if the product
map $(x,y)\to xy$ is left uniformly continuous; see  \cite{ROE}.\par 
The main result of this note 
is as follows: The equality FSIN=SIN holds in the class of topological groups $G$
for which there are a neighborhood $V$ of the unit and an integer $p\geq 2$
such that the map $\phi_p: V\to G$ is left uniformly continuous.
This extends the locally compact case and allows to give an affirmative answer
to Itzkovitz's question for
periodic topological groups which are of the second category.

\section{Two lemmas}

In what follows, $G$ is a fixed Hausdorff topological group. A subset
$A$ of $G$ is called {\it left $V$-thin} in $G$, where 
$V\in{\mathcal N}(e)$, if the set $\cap_{a\in A}a^{-1}Va$
is a neighborhood of $e$ in $G$. If $A$ is left $V$-thin in $G$
for every $V\in{\mathcal N}(e)$, then $A$ is said
to be {\it left thin} in $G$. The concept of ``right $V$-thin'' is defined similarly.
Note that the group $G$ is SIN if and only if
$G$ is left thin in itself. The set $A$
is said to be {\it left} ({\it right}) {\it $V$-discrete} in $G$ if $a=b$ whenever $a\in bV$ ($a\in Vb$) and $a,b\in A$. The set $A$ is
said to be {\it Roelcke-discrete}
if  there is $V\in{\mathcal N}(e)$ such
that $a=b$ whenever $a,b\in A$ and   $a\in VbV$.
Note that this means that $A$ is a uniformly discrete subset of $G$ (with respect to $V$) when
$G$ is equipped with the lower uniformity ${\mathcal U}_l\wedge{\mathcal U}_r$,
sometimes called  the Roelcke uniformity. Finally,
the set $A$ is said to be {\it left neutral} if for every $V\in{\mathcal N}(e)$,
there is $U\in{\mathcal N}(e)$ such that $UA\subset AV$. It is well known that the group
$G$ is FSIN if and only if every subset of $G$ is left neutral, see
\cite{PR2}.\par 

The main result is obtained as a consequence of the following
lemmas. 

\begin{lemma}
Let $(V_1,\ldots, V_q)$ {\rm ($q\geq 2$)} be a  decreasing sequence
of  neighborhoods of $e$ in $G$ and $M\subset G$, with $V_2^2\subset V_1$, such that
  $mV_{i+1}^2\subset V_im$ for every $1\leq i<q$ and $m\in M$.
Suppose  that there are a symmetric
$U\in{\mathcal N}(e)$ and $2\leq p\leq q$ such that 
$U\subset V_p$ and $m^{-p}(mu)^p\in V_p$ for every $m\in M$ and $u\in U$.\par 
Then  $M$ is right $V_1$-thin in $G$.
\end{lemma}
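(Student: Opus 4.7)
The plan is to expand $(mu)^p$ combinatorially, introduce a family of partial products $w_k$, and then use a downward induction against the conjugation hypothesis to bound them.

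Setting $c_k := m^{-k} u m^k$, a routine manipulation (push every factor of $m$ to the left) yields
$$(mu)^k = m^k \, c_{k-1} c_{k-2} \cdots c_1 c_0 \qquad (1 \le k \le p),$$
so that $w_k := m^{-k}(mu)^k = c_{k-1} c_{k-2} \cdots c_0$; in particular, $w_p \in V_p$ is exactly the power hypothesis. Writing $w_k = m^{-1} w_{k-1} m \cdot u$ (obtained by splitting off the last factor $mu$ of $(mu)^k$) rearranges into the key recursion
$$w_{k-1} = m\, w_k\, u^{-1}\, m^{-1}.$$

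I then prove by downward induction on $k$ that $w_k \in V_k$ for $k = p, p-1, \ldots, 2$. The base case is the hypothesis. For the inductive step, $u^{-1} \in U \subset V_p \subset V_k$ (using symmetry of $U$) gives $w_k u^{-1} \in V_k^2$, and the conjugation hypothesis $m V_k^2 m^{-1} \subset V_{k-1}$ --- valid since $1 \le k-1 < q$ --- delivers $w_{k-1} \in V_{k-1}$.

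Specializing to $k = 2$ gives $w_2 = (m^{-1} u m) \cdot u \in V_2$, hence
$$m^{-1} u m = w_2 \cdot u^{-1} \in V_2 \cdot V_p \subset V_2^2 \subset V_1,$$
using $V_p \subset V_2$ and the explicit assumption $V_2^2 \subset V_1$. Thus $m^{-1} U m \subset V_1$, equivalently $U \subset m V_1 m^{-1}$, for every $m \in M$; consequently $U \subset \bigcap_{m \in M} m V_1 m^{-1}$ and $M$ is right $V_1$-thin.

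The only real obstacle is spotting the telescoping structure --- the sequence $(w_k)$ together with the recursion $w_{k-1} = m w_k u^{-1} m^{-1}$, which turns the single power constraint $w_p \in V_p$ into a cascade of constraints at every level; once this structure is identified, the downward induction against the conjugation hypothesis is mechanical.
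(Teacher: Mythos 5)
Your proof is correct and follows essentially the same route as the paper: the recursion $w_{k-1}=m\,w_k\,u^{-1}m^{-1}$ for $w_k=m^{-k}(mu)^k$ is exactly the telescoping step the authors iterate, and the downward induction to $w_2=(m^{-1}um)u\in V_2$ followed by $m^{-1}um\in V_2^2\subset V_1$ matches their conclusion $um\in mV_2^2\subset mV_1$. The only cosmetic difference is your explicit factorization $w_k=c_{k-1}\cdots c_0$, which the paper leaves implicit.
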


\begin{proof} {\rm 
Let $u\in U$
and $m\in M$. Since $m^{-p}(mu)^p\in V_p$, we have
$$m^{-(p-1)}u(mu)^{p-2}m=m^{-p}(mu)^pu^{-1}\in V_p^2\subset m^{-1}V_{p-1}m,$$
 hence $$m^{-(p-2)}u(mu)^{p-2}\in V_{p-1}.$$
We continue this calculation until we get $m^{-1}u(mu)\in V_2$, which gives
 $um\in mV_2^2\subset mV_1$ as claimed.}  
\end{proof}

\begin{lemma} Suppose that every Roelcke discrete subset of $G$ is left thin in
$G$.
Let $A\subset G$. Then, for every $V\in{\mathcal N}(e)$ and $n\geq 2$, there are $M\subset AV$ and a decreasing sequence 
$(V_1,\ldots,V_n)$ of arbitrary small neighborhoods of $e$ in $G$ {\rm (}e.g.
 $V_1\subset V$ and
$V_{2}^2\subset V_1${\rm )}, such that
\begin{enumerate}
\item[{\rm (a)}] $mV_{k+1}^2\subset V_k m$ for every $m\in M$ and $1\leq k<n$,
\item[{\rm (b)}] if $M$ is right  $V$-thin in $G$, then
$A$ is right $V^{3}$-thin in $G$.
\end{enumerate}
\end{lemma}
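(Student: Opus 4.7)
The plan is to take $M$ to be a maximal left $V$-discrete subset of $A$. After replacing $V$ by $V\cap V^{-1}$, which only strengthens the conclusion of (b), I may assume $V$ is symmetric. By Zorn's lemma, pick $M\subset A$ maximal with the property that no two distinct points of $M$ are left $V$-close. Then $M\subset A\subset AV$, and maximality gives, for every $a\in A$, some $m\in M$ with $a=mv$, $v\in V$.

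To prove (a), observe that left $V$-discreteness implies Roelcke-discreteness, so the standing hypothesis of the lemma yields that $M$ is left thin in $G$. Starting from any symmetric $V_1\subset V$, chosen as small as one likes, I would inductively select symmetric $V_{k+1}$ satisfying $V_{k+1}^2\subset V_k$ and $V_{k+1}^2\subset\bigcap_{m\in M}m^{-1}V_km$; the latter intersection is a neighborhood of $e$ precisely by left thinness of $M$. This gives $mV_{k+1}^2\subset V_km$ for every $m\in M$ and $1\leq k<n$, which is (a), and the $V_k$ may be made arbitrarily small at each step.

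For (b), suppose $M$ is right $V$-thin and choose a neighborhood $W$ of $e$ with $W\subset mVm^{-1}$ for every $m\in M$. Given $a\in A$, write $a=mv$ as above; then $m=av^{-1}$, so $mVm^{-1}=a(v^{-1}Vv)a^{-1}$ and hence $a^{-1}Wa\subset v^{-1}Vv$. Symmetry of $V$ ensures $v,v^{-1}\in V$, whence $v^{-1}Vv\subset V^3$, and therefore $W\subset aV^3a^{-1}$ for every $a\in A$, which is the right $V^3$-thinness of $A$.

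The key observation driving the whole argument is the elementary cubic inclusion $v^{-1}Vv\subset V^3$ available for $v$ in a symmetric $V$, which is what forces the exponent $3$ in the conclusion. The only substantive use of the hypothesis on $G$ lies in the construction of the sequence $(V_k)$ via left thinness of $M$; part (b) itself is then a direct maximality argument, and there is no real obstacle beyond matching the symmetric neighborhood bookkeeping with the sandwich $a=mv$ produced by maximality.
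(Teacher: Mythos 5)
There is a genuine gap, and it occurs at the very first step of your argument for (a). You assert that left $V$-discreteness of $M$ implies Roelcke-discreteness, but the implication goes the other way: since $bV\subset VbV$, a set that is Roelcke-discrete with respect to $V$ is left $V$-discrete, whereas a left $V$-discrete set need not be uniformly discrete for the coarser Roelcke uniformity (the condition $a\in VbV\Rightarrow a=b$ is \emph{stronger} than $a\in bV\Rightarrow a=b$, and no single $W$ need witness it for your maximal set). Consequently the standing hypothesis of the lemma cannot be applied to your $M$, and you have no grounds for concluding that $M$ is left thin; the entire inductive construction of $(V_1,\ldots,V_n)$ collapses. Your part (b) is fine as far as it goes --- it is the same computation the paper uses, resting only on $A\subset MV$ --- but it depends on the defective choice of $M$.

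The defect is not easily patched by switching to a maximal Roelcke-discrete set, and seeing why explains the shape of the paper's proof. If $B$ is maximal Roelcke-discrete with respect to a symmetric $U$, then $A\subset UBU$, and writing $b=uav$ one gets $M=\{av:a\in A\}\subset UB$; this $M$ is left $U^3$-thin because $B$ is left $U$-thin, but it is \emph{not} left thin: for a neighborhood $W$ much smaller than $U$, conjugation by the perturbing elements $u\in U$ destroys any uniform control, so $\bigcap_{m\in M}m^{-1}Wm$ need not be a neighborhood of $e$. Hence your inductive selection of $V_{k+1}$ with $V_{k+1}^2\subset\bigcap_{m\in M}m^{-1}V_km$ fails after the first step. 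The paper circumvents exactly this by iterating the construction $n-1$ times: it produces sets $M_1,\ldots,M_{n-1}$, each obtained from the previous one by a right perturbation drawn from a smaller symmetric $U_k$ with $U_k^3\subset V_k$, so that $M_k$ is left $V_k$-thin at the scale needed to choose $V_{k+1}$ with $V_{k+1}^{2n}\subset m^{-1}V_km$ for $m\in M_k$; property (a) for the final set $M=M_{n-1}$ is then recovered by writing $m=m_k\cdot v_{(a,k+1)}\cdots v_{(a,n-1)}$ and absorbing the tail into the exponent $2n$. A single maximal discrete set cannot deliver thinness at all the scales $V_1,\ldots,V_n$ simultaneously; the telescoping family of perturbations is the essential idea your proposal is missing.
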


\begin{proof} {\rm 
 In addition 
 to $(V_1,\ldots,V_n)$, we will build 
a sequence $U_1$, ..., $U_{n-1}$ of neighborhoods
of $e$ and a sequence
  $M_1$, ..., $M_{n-1}$ of subsets of $G$; then we take $M=M_{n-1}$ and
use
  the sequence $(U_1,\ldots, U_{n-1})$  to ensure  the properties (a) and
(b).\par
 To begin, put $V_1=V$ and let $U_1$ be a symmetric neighborhood of
$e$  such that $U_1^{n}\subset V_1$ and $U_1^3\subset V_1$ (if $n< 3$).
It follows from Zorn's lemma that there is
a maximal set  $B_1\subset G$ which is  Roelcke-discrete with respect to $U_1$; 
in
particular $A\subset U_1B_1U_1$. 
For each $a\in A$, choose $(u_{(a,1)},b_{(a,1)},v_{(a,1)})\in U_1\times
B_1\times U_1$ such that
  $b_{(a,1)}=u_{(a,1)}av_{(a,1)}$ and put $M_1=\{av_{(a,1)}: a\in A\}$. Since
  $B_1$ is left $U_1$-thin in $G$, $M_1$ is left  $U_1^3$-thin in $G$, thus left $V_1$-thin in $G$. Choose 
$V_2\in {\mathcal {N}}(e)$,
with $V_2^2\subset V_1$ and   $V_2^{2n}\subset m^{-1}V_1m$ for every $m\in M_1$.\par 
At step 2,   choose
a symmetric  $U_2 \in {\mathcal{N}}(e)$ such that $U_2\subset U_1$ and
$U_2^3\subset V_2$. Zorn's lemma again gives 
a maximal $B_2\subset G$ which is  Roelcke-discrete with respect
to $U_2$,  for which we obtain  $M_1\subset U_2B_2U_2$. 
By the definition of  $M_1$, for each $a\in A$, there is 
$(u_{(a,2)},b_{(a,2)},v_{(a,2)})\in U_2\times B_2\times U_2$ such that
$b_{(a,2)}=u_{(a,2)}av_{(a,1)}v_{(a,2)}$. Put $M_2=\{av_{(a,1)}v_{(a,2)}: a\in
A\}$ and note   as above that
$M_2$ is left   $V_2$-thin in $G$. Choose 
$V_3\in {\mathcal {N}}(e)$,
with $V_3\subset V_2$ and   $V_3^{2n}\subset m^{-1}V_2m$ for every $m\in M_2$.\par 
This process allows us  to obtain 
sequences $(V_1,\ldots,V_n)$, $(M_1,\ldots, M_{n-1})$ and
$(U_1,\ldots,U_{n-1})$, with  the following:

\begin{enumerate}
\item $(U_1,\ldots,U_{n-1})$ is a decreasing sequence of symmetric
neighborhoods of $e$, with 
$U_1^{n}\subset V_1$;
\item for every $1\leq k<n$,  $U_k^3\subset V_{k}$;
\item for every $1\leq k<n$,  $M_{k}=\{av_{(a,1)}\cdots v_{(a,k)}: a\in A\}$,
where
$$(v_{(a,1)},\ldots,v_{(a,k)})\in U_1\times\ldots\times U_k;$$
\item  for every $1\leq k<n$,  $V_{k+1}^{2n}\subset m^{-1}V_km$ for each $m\in M_k$.
\end{enumerate}
It remains to verify that the properties (a) and (b)  
are satisfied by  the sequence $(V_1,\ldots,V_n)$ and the set $M=M_{n-1}$.
Note that from (3) (with $k=n-1$), it follows that $M\subset AV$
and $A\subset MV$, since $U_1\cdots U_{n-1}\subset
U_1^n\subset V$.\par
(a)
Let $m\in M$ and $1\leq  k<n$. There is $a\in A$ such that
$m=av_{(a,1)}\cdots v_{(a,n-1)}$ with
$(v_{(a,1)},\ldots,v_{(a,n-1)})\in U_1\times\ldots\times U_{n-1}$. In case
$1\leq
k<n-1$,  write $$ m=m_k\cdot v_{(a,k+1)}\cdots v_{(a,n-1)}$$
with $m_k\in M_k$. It follows from (1), (2) and (4) that 
$$mV_{k+1}^2m^{-1}\subset
m_kV_{k+1}^{2n}m_k^{-1}\subset V_k.$$ 
For $k=n-1$, the inclusions $mV_n^2m^{-1}\subset V_{n-1}$ 
for each $m\in M$, follow from (4).\par
(b)  This
follows immediately from the fact that $A\subset MV$.} 
\end{proof}

\section{FSIN versus SIN}  
  
  Following \cite{ROE}, a topological
group $G$ is said to be ASIN (for almost SIN), if there exists a
 neighborhood of the unit in $G$ which is left (or right) thin in $G$.
Equivalently, $G$ is ASIN if there exists a nonempty open
subset of $G$,  which is left (or right) thin in $G$ (indeed,
if $A,B\subset G$   are left thin in $G$, then
the set $AB$ is left thin in $G$).

\begin{proposition} Suppose that there are
  $p\geq 2$ and a nonempty open set $\Omega\subset G$
   such that the mapping $\Omega\ni x\to
x^p\in G$
 is left uniformly continuous. If every Roelcke-discrete subset of $G$
is left thin in $G$, then
$G$ is ASIN.
\end{proposition}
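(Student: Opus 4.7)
The plan is to produce a nonempty open subset $A$ of $G$ that is right thin in $G$; this establishes the ASIN property. I would fix $x_{0}\in\Omega$, pick a symmetric $W\in{\mathcal N}(e)$ with $x_{0}W^{3}\subset\Omega$, and set $A=x_{0}W$. It then suffices to show that $A$ is right $V^{\ast}$-thin for every $V^{\ast}\in{\mathcal N}(e)$.

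So fix such a $V^{\ast}$ and choose a symmetric $V\in{\mathcal N}(e)$ with $V\subset W$ and $V^{3}\subset V^{\ast}$. The standing hypothesis on Roelcke-discrete sets is exactly what Lemma~2.2 requires, so I apply it with $n=p$ and this $V$ to obtain a decreasing sequence $V_{1}\supset\cdots\supset V_{p}$ of neighborhoods of $e$, with $V_{1}\subset V$ and $V_{2}^{2}\subset V_{1}$, together with a set $M\subset AV$ satisfying $mV_{k+1}^{2}\subset V_{k}m$ for every $m\in M$ and $1\leq k<p$. Because the $V_{k}$ may be taken arbitrarily small, I arrange in advance that $V_{p}$ is small enough for the next step; in any event $M\subset AV\subset x_{0}W^{2}\subset\Omega$.

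The next step invokes the power-map hypothesis. Left uniform continuity of $\phi_{p}$ on $\Omega$, applied to the target neighborhood $V_{p}$, produces a symmetric $U\in{\mathcal N}(e)$; replacing it by $U\cap V_{p}\cap V$, I may assume $U\subset V_{p}$ and $MU\subset x_{0}W^{3}\subset\Omega$, with the property that $x^{-p}y^{p}\in V_{p}$ whenever $x,y\in\Omega$ and $x^{-1}y\in U$. For every $m\in M$ and $u\in U$ both $m$ and $mu$ lie in $\Omega$ and $m^{-1}(mu)=u\in U$, so $m^{-p}(mu)^{p}\in V_{p}$. All hypotheses of Lemma~2.1 are now in force with $q=p$, and it yields that $M$ is right $V_{1}$-thin, hence right $V$-thin, in $G$. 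Lemma~2.2(b) then forces $A$ to be right $V^{3}$-thin, and therefore right $V^{\ast}$-thin. Since $V^{\ast}$ was arbitrary, $A$ is right thin, so $G$ is ASIN.

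The substantive content is that Lemma~2.2 reduces right thinness of $A$ to that of its ``companion'' set $M$, while Lemma~2.1 converts the power-map assumption into right thinness of $M$. The main obstacle is the nested bookkeeping: one must guarantee that both $M$ and $MU$ sit inside $\Omega$, while simultaneously allowing $V_{p}$ (and the symmetric $U$ produced by uniform continuity) to be chosen small enough for every required containment; once this is organized, the two lemmas do all the real work.
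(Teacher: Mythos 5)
Your argument is correct and follows essentially the same route as the paper: fix a basepoint in $\Omega$, take $A$ a small open translate, feed $A$ into Lemma~2.2 to get $M$ and the tower $(V_1,\ldots,V_p)$, use left uniform continuity of $\phi_p$ to verify the hypothesis $m^{-p}(mu)^p\in V_p$ of Lemma~2.1, and then pass back through Lemma~2.2(b). Your version is if anything slightly more careful than the paper's in recording that $U\subset V_p$ and that $M$ and $MU$ land inside $\Omega$.
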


\begin{proof}{\rm As noted above, it suffices to  show that $G$ has a nonempty
open set
which is right thin in $G$. Fix  $g\in \Omega$ and choose $U\in{\mathcal N}(e)$
 such that $gU^3\subset \Omega$. Using Lemmas 2.1 and 2.2, we will prove that the open set $A=gU$ is right thin in $G$.\par
Let $V\in{\mathcal N}(e)$ with $V\subset U$. Applying      
Lemma 2.2 to
$A$ and $V$, we get a set $M\subset gUV$  and a 
sequence $(V_1=V,V_2,\ldots, V_p)$ of neighborhoods of $e$ satisfying (a) and
(b) in this lemma. Clearly, the assumption of
 Lemma 2.1  is  satisfied by $(V_1=V,V_2,\ldots, V_p)$ and
$M$.
Choose a symmetric $W\in{\mathcal N}(e)$ with $W\subset
U$ such that  $x^{-p}y^p\in V_p$ whenever $x,y\in \Omega$ and $x^{-1}y\in W$.
 Then, for all $m\in
M$ and $w\in W$,
we have  $m,mw\in \Omega$ and $m^{-1}mw\in W$, thus $m^{-p}(mw)^p\in V_p$. It
follows from Lemma 2.1 that $M$ is right $V$-thin in $G$, hence, by  
Lemma 2.2(b),  $A$
is right $V^3$-thin in $G$.}
\end{proof}

  The next two lemmas
correspond 
 respectively to Proposition 3.5 and Lemma 3.3 in \cite{BT2}.

\begin{lemma} Suppose that every  Roelcke-discrete subset of $G$ is left thin
in $G$. If
$G$ is ASIN, then $G$ is SIN.
\end{lemma}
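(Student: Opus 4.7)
The plan is to combine the ASIN hypothesis with the Roelcke-discrete assumption through the product-closure property of left thin sets recorded just before Proposition~3.1: if $A,B\subset G$ are left thin in $G$, then so is $AB$. Since being SIN is exactly the statement that $G$ is left thin in itself, it suffices to exhibit a left thin set in $G$ containing all of $G$.

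First I would turn ASIN into a left thin \emph{neighborhood} of $e$. The hypothesis gives a nonempty open set $\Omega\subset G$ which is left or right thin in $G$; inverting if needed (inversion is a homeomorphism of $G$ swapping left and right thin), I may assume $\Omega$ is left thin. Translating by any $x_0\in\Omega$ yields an open left thin neighborhood $N=x_0^{-1}\Omega$ of $e$, and shrinking I fix a symmetric $W\in\mathcal{N}(e)$ with $W\subset N$; as a subset of a left thin set, $W$ is itself left thin.

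Next I would invoke the Roelcke-discrete hypothesis. By Zorn's lemma, choose $D\subset G$ maximal among subsets that are Roelcke-discrete with respect to $W$. Maximality forces $G\subset WDW$: any $g\in G\setminus D$ with $g\notin WDW$ could be adjoined to $D$. Since $D$ is Roelcke-discrete, the hypothesis yields that $D$ is left thin in $G$. Applying the product-closure property to $W$, $D$ and $W$ then shows $WDW$ is left thin in $G$, and since subsets of left thin sets are left thin, so is $G$, i.e.\ $G$ is SIN.

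I do not foresee a real obstacle here: every ingredient is either explicitly recorded in the preparatory material (product- and subset-closure of left thin sets, equivalence of the left and right versions of ASIN under inversion) or is a standard Zorn's lemma construction tailored to the paper's definition of ``Roelcke-discrete''. The only care needed is keeping left versus right thin aligned, which is done once and for all by inverting $\Omega$ at the outset if necessary.
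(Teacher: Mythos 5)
Your argument is correct. Note, however, that the paper itself gives no proof of this lemma: it is stated as a quotation of Proposition~3.5 of \cite{BT2}, so there is no in-text argument to compare against. What you have written is the natural self-contained proof, and every step checks out: left translation and passage to subsets preserve left thinness (for $N=x_0^{-1}\Omega$ one uses that $x_0 V x_0^{-1}$ is again a neighborhood of $e$), inversion interchanges the left and right versions so the ASIN hypothesis may indeed be normalized to a left thin open set, the Zorn's lemma construction of a maximal $W$-Roelcke-discrete set $D$ with $G\subset WDW$ is exactly the device the paper already employs in the proof of Lemma~2.2 (symmetry of $W$ being needed, as you have it, to adjoin a point violating $G\subset WDW$), and the product-closure remark recorded before Proposition~3.1 applies twice to give that $WDW$ is left thin, hence so is its subset $G$. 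The one point worth making explicit if this were written out in full is the verification of that product-closure property (given $V$, first shrink $\cap_{a\in A}a^{-1}Va$ to a neighborhood $U$ of $e$ and then use left thinness of $B$ on $U$), but that is exactly the fact the paper asserts parenthetically, so you are entitled to it.
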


\begin{lemma}  If $G$ is FSIN, then every  Roelcke-discrete subset of $G$ is
left thin
in $G$.
\end{lemma}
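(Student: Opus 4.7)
The plan is to exploit the characterization recalled just above Lemma 2.1: $G$ is FSIN if and only if every subset of $G$ is left neutral. Since inversion is a homeomorphism exchanging the left and right uniformities, the FSIN property is self-dual; passing through inversion yields the equivalent ``right-sided'' form that I will actually use, namely: for every $A\subset G$ and every $V\in{\mathcal N}(e)$, there is $U\in{\mathcal N}(e)$ such that $AU\subset VA$.

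Fix a Roelcke-discrete $A\subset G$ and a symmetric $W\in{\mathcal N}(e)$ witnessing it, so that $a,b\in A$ and $a\in WbW$ force $a=b$. To prove $A$ is left thin, I would pick an arbitrary $V\in{\mathcal N}(e)$, choose a symmetric $V'\in{\mathcal N}(e)$ with $V'\subset V\cap W$, apply the form above to $A$ and $V'$ to obtain $U_{0}\in{\mathcal N}(e)$ with $AU_{0}\subset V'A$, and set $U=U_{0}\cap W$. The goal becomes to show $U\subset a^{-1}Va$ for every $a\in A$, which is exactly left $V$-thinness of $A$.

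The verification is where Roelcke-discreteness intervenes. Given $a\in A$ and $u\in U$, write $au=va'$ with $v\in V'$ and $a'\in A$; then $a'=v^{-1}au\in V'aU\subset WaW$, so $a'=a$ by the choice of $W$. Therefore $au=va\in V'a\subset Va$, i.e.\ $u\in a^{-1}Va$, giving the desired inclusion. The main point, and essentially the only non-routine step, is recognising that it is the right-sided form $AU\subset VA$ rather than the native left-neutral form $UA\subset AV$ that pairs with the definition $\bigcap_{a\in A}a^{-1}Va$ of left $V$-thinness via the cancellation $a'=a$; once this is in place, no further obstacle remains.
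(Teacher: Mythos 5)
Your proof is correct. Note that the paper itself gives no argument for this lemma: it simply cites Lemma 3.3 of \cite{BT2}, so there is no in-paper proof to compare against. Your route is the natural (and, as far as I can tell, the standard) one: combine the Protasov--Saryev characterization of FSIN recalled in Section 2 with the cancellation supplied by Roelcke-discreteness. The one genuinely delicate point --- that the native left-neutral inclusion $UA\subset AV$ pairs with \emph{right} thinness, so that one must first dualize through inversion to the form $AU\subset VA$ before the cancellation $a'=a$ yields $aU\subset Va$ for each individual $a$, i.e.\ left $V$-thinness --- is exactly the point you isolate, and your handling of it is correct. (Equivalently, one could run the undualized argument to conclude that every Roelcke-discrete set is right thin and then apply it to $A^{-1}$, which is again Roelcke-discrete; this gives the same result.) The only cosmetic caveat is that the witness $V$ of Roelcke-discreteness in the paper's definition is not assumed symmetric, but shrinking it to a symmetric $W$ preserves the property, as you implicitly do.
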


Now, we arrive at the main result of this note.

\begin{theo} Suppose that for some
$p\geq 2$,
the power map $G\ni g\to g^p\in G$ is left uniformly continuous
when restricted to some nonempty open subset of $G$. If every Roelcke-discrete
subset of $G$ is left thin in $G$, then $G$ is SIN.
\end{theo}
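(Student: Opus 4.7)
The plan is to observe that the theorem is a direct concatenation of Proposition 3.1 and Lemma 3.2, both already established in this section. Assume the two hypotheses: (i) there exist $p\geq 2$ and a nonempty open $\Omega\subset G$ such that $\phi_p\colon\Omega\to G$ is left uniformly continuous, and (ii) every Roelcke-discrete subset of $G$ is left thin in $G$. First I would apply Proposition 3.1: its hypotheses are exactly (i) and (ii), so it yields that $G$ is ASIN, i.e.\ some nonempty open subset of $G$ is right thin. Next I would feed this into Lemma 3.2, whose hypotheses are (ii) together with $G$ being ASIN; its conclusion is that $G$ is SIN. Stringing the two implications together produces the theorem.

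I do not expect any real obstacle at this stage, because all the substantive work has already been discharged earlier. The delicate construction — producing, from the power-map hypothesis, a nonempty open set that is right thin in $G$ — was carried out in the proof of Proposition 3.1, where Lemma 2.2 was used to replace the open set $gU$ by a smaller set $M\subset gUV$ whose conjugation behaviour is controlled by a decreasing chain $(V_1,\dots,V_p)$, and Lemma 2.1 was then invoked, using the locally left uniformly continuous power map, to upgrade that control into right $V_1$-thinness of $M$, and finally Lemma 2.2(b) transferred this to $A=gU$. Similarly, the passage ASIN $\Rightarrow$ SIN under the Roelcke-discrete hypothesis is precisely the content of Lemma 3.2, imported from \cite{BT2}.

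The role worth flagging is that of Lemma 3.3: it guarantees that the Roelcke-discrete hypothesis (ii) is automatically satisfied in every FSIN group. Thus, the theorem as stated is the technical core of the paper, and the headline consequence FSIN $=$ SIN (within the class of groups for which some power map $\phi_p$ is locally left uniformly continuous) follows by combining the theorem with Lemma 3.3. In particular this recovers the locally compact case and, as announced in the introduction, applies to groups of finite exponent and, via a Baire-category argument I would pursue separately, to periodic topological groups of the second category.
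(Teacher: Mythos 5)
Your proof is correct and coincides with the paper's own argument: Proposition 3.1 gives that $G$ is ASIN, and Lemma 3.2 then upgrades this to SIN under the Roelcke-discrete hypothesis. The additional remarks on Lemma 3.3 are accurate but not needed for the theorem itself.
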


\begin{proof}  {\rm By Proposition 3.1, $G$ is ASIN. Then, from
Lemma 3.2, $G$ is SIN.}
\end{proof}

\begin{cor} Let $G$ be an FSIN group. If for some
$p\geq 2$,
the power map $G\ni g\to g^p\in G$ is left uniformly continuous
when restricted to some nonempty open subset of $G$, then $G$ is SIN.
\end{cor}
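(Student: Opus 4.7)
The corollary is essentially a direct composition of Lemma 3.3 and Theorem 3.4, so the proof plan is quite short. I would first observe that the hypothesis ``$G$ is FSIN'' is exactly what triggers Lemma 3.3: this lemma says that in any FSIN group, every Roelcke-discrete subset is left thin. So the first move is simply to invoke Lemma 3.3 and record that every Roelcke-discrete subset of $G$ is left thin in $G$.

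Next, I would notice that the remaining hypothesis of the corollary (existence of $p\geq 2$ and a nonempty open set on which $\phi_p$ is left uniformly continuous) matches verbatim the first hypothesis of Theorem 3.4, while the conclusion I just extracted from Lemma 3.3 matches its second hypothesis. So I can immediately apply Theorem 3.4 to conclude that $G$ is SIN.

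There is no real obstacle here: the work has been done in establishing Proposition 3.1, Lemma 3.2 and Lemma 3.3, and Theorem 3.4 already packaged two of them together. The corollary is just the observation that for an FSIN group, the Roelcke-discrete-left-thin hypothesis in Theorem 3.4 is automatic, so FSIN (together with the power-map assumption) suffices for SIN. The proof is therefore a one-line chain: FSIN $\Rightarrow$ (Roelcke-discrete subsets are left thin) by Lemma 3.3, hence SIN by Theorem 3.4.
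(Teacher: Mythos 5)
Your proof is correct and is exactly the paper's argument: the paper's proof of this corollary is the one-line "This follows from Theorem 3.4 and Lemma 3.3," which is precisely the chain you describe (FSIN gives, via Lemma 3.3, that every Roelcke-discrete subset is left thin, and then Theorem 3.4 applies). Nothing to add or correct.
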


\begin{proof}  {\rm This follows from Theorem 3.4 and Lemma 3.3.}
\end{proof}

It is customary to say that the topological group $G$ 
is  {\it topologically
torsion} if for any $g \in G$ the sequence $(g^{n!})_{n\in\mathbb
N}$ converges to $e$ in $G$. The reader is referred to \cite{DIK} for
useful generalizations of this concept. We do not know if Corollary 3.5
remains true if $G$ is assumed to be topologically torsion. As a partial answer,
we offer the following.

\begin{proposition} Suppose that $G$ is FSIN and that there is $p\geq 2$ such
that for every $g\in G$, the
sequence $(g^{pn})_{n\in\mathbb N}$ converges to $e$ in $G$.
Then $G$ is SIN.
\end{proposition}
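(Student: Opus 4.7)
The plan is to reduce via Lemma 3.2. Since $G$ is FSIN, Lemma 3.3 yields that every Roelcke-discrete subset of $G$ is left thin, so it suffices to show $G$ is ASIN. To prove ASIN I mimic the proof of Proposition 3.1: fix $g_0\in G$ and $U\in\mathcal{N}(e)$, set $A=g_0U$, and show that $A$ is right thin in $G$. Given $V\in\mathcal{N}(e)$ with $V\subset U$, apply Lemma 2.2 to $A$ and $V$ with $n=pN$ for a large integer $N$ to be chosen, obtaining $M\subset AV$ and a decreasing sequence $(V_1=V,V_2,\ldots,V_{pN})$ of arbitrarily small neighborhoods of $e$ satisfying (a) and (b). By Lemma 2.2(b) it is enough to show that $M$ is right $V$-thin in $G$, and by Lemma 2.1 applied with the exponent $p':=pN\le n$ it is enough to exhibit a symmetric $W\in\mathcal{N}(e)$, $W\subset V_{pN}$, such that
\[
m^{-pN}(mw)^{pN}\in V_{pN}\qquad\text{for all }m\in M\text{ and }w\in W.
\]

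This step replaces the use of left uniform continuity of $\phi_p$ in Proposition 3.1; the substitute is the convergence hypothesis. For every fixed $m\in M$, $m^{pN}\to e$ as $N\to\infty$, and for every fixed $m,w$, $(mw)^{pN}\to e$, so pointwise $m^{-pN}(mw)^{pN}\to e$. The main obstacle, and the technical heart of the proof, is converting this pointwise information into the uniform statement above over $m\in M$ and $w$ in a suitable neighborhood of $e$. The route I plan to take is to expand
\[
m^{-pN}(mw)^{pN}=\prod_{k=0}^{pN-1}m^{-k}wm^k
\]
and to control each conjugate $m^{\pm k}wm^{\mp k}$ via iterated use of the chain estimate $mV_{k+1}^2m^{-1}\subset V_k$ from Lemma 2.2(a); a symmetric chain estimate involving $m^{-1}V_{k+1}m$ is also needed and can be arranged by strengthening the construction in Lemma 2.2 via the inversion-invariance of Roelcke discreteness together with Lemma 3.3, so that both $M$ and $M^{-1}$ are left thin in $G$. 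The convergence hypothesis is then invoked, with $N$ chosen after $M$ and the $V_k$'s have been fixed, to absorb the boundary contributions $m^{\pm pN}$.

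Once the displayed estimate is established, Lemma 2.1 yields that $M$ is right $V_1=V$-thin in $G$, and Lemma 2.2(b) then yields that $A$ is right $V^3$-thin in $G$. As $V\in\mathcal{N}(e)$ was arbitrary and $V^3$ can be made arbitrarily small, $A$ is right thin in $G$, so $G$ is ASIN, and Lemma 3.2 finally concludes that $G$ is SIN.
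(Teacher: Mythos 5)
There is a genuine gap, and it stems from aiming at the wrong target. You apply Lemma 2.1 with exponent $pN$ and therefore need $m^{-pN}(mw)^{pN}\in V_{pN}$ for a single $N$ valid for all $m\in M$ and $w\in W$. But $N$ determines $n=pN$, hence the length of the sequence produced by Lemma 2.2, so $N$ must be fixed \emph{before} $M$ and the $V_k$'s exist; your plan to choose $N$ ``after $M$ and the $V_k$'s have been fixed'' is circular. Even waiving that, the hypothesis only gives pointwise convergence $m^{pn}\to e$, and $M$ is in general infinite, so no single $N$ makes the boundary contributions $m^{\pm pN}$ small for every $m\in M$ at once. Moreover, the proposed expansion $m^{-pN}(mw)^{pN}=\prod_{k}m^{-k}wm^{k}$ cannot be forced into $V_{pN}$ (the \emph{smallest} of the neighborhoods) by estimating each of the $pN$ conjugates separately, since products of neighborhoods only get larger. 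You correctly identified the pointwise-to-uniform passage as the heart of the matter, but it is an obstacle created by your own choice of exponent, and your outline does not resolve it.

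The paper's proof sidesteps all of this by keeping the exponent equal to the fixed $p$ from the hypothesis and observing that the condition in Lemma 2.1, namely $m^{-p}(mu)^p\in V_p$ for every $m\in M$ and $u\in U$, is verified pair by pair, so no uniformity in $m$ is required. Take $A=G$, obtain $(V_1=V,\ldots,V_p)$ and $M$ from Lemma 2.2, and choose a symmetric $W$ with $W^4\subset V_p$. For each individual pair $(g,u)$ with $u\in W$, pick $n$ (depending on $g$ and $u$) so large that $g^{p(n-1)}$, $g^{-pn}$, $(gu)^{p(n+1)}$ and $(gu)^{-pn}$ all lie in $W$; the identity
\[
g^{-p}(gu)^p=g^{p(n-1)}g^{-pn}(gu)^{p(n+1)}(gu)^{-pn}
\]
then gives $g^{-p}(gu)^p\in W^4\subset V_p$. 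Lemma 2.1 applies with the original $p$, so $M$ is right $V$-thin, and Lemma 2.2(b) gives that $G$ is right $V^3$-thin in itself; since $V$ was arbitrary, $G$ is SIN directly (no detour through ASIN and Lemma 3.2 is needed, though that detour would be harmless). If you replace your uniform exponent-$pN$ estimate by this pointwise exponent-$p$ verification, the rest of your outline goes through.
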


\begin{proof} {\rm Let  $V\in{\mathcal N}(e)$ and $A=G$.  As in the proof of
Proposition 3.1, considering
a sequence $(V_1=V,\ldots, V_p)$ in ${\mathcal N}(e)$ and $M\subset G$ (by Lemma
2.2), we can show that $M$ is right $V$-thin in $G$. Then, we conclude
from Lemma 2.2(b) that $G$ is right thin in itself. Indeed, take
a symmetric $W\in{\mathcal N}(e)$ with 
  $W^4\subset V_p$. For $g\in G$ and $u\in W$, there is
$N\in\mathbb N$
such that $g^{-pn}$, $g^{pn}$, $(gu)^{pn}$ and $(gu)^{-pn}$ belong to $W$ for
all $n\geq N$. Taking  $n\geq N+1$, we ontain
 $g^{-p}(gu)^p=g^{p(n-1)}g^{-pn}(gu)^{p(n+1)}(gu)^{-pn}\in
W^4\subset V_p$. Therefore, by Lemma 2.1, $M$ is right $V$-thin in $G$.}   
\end{proof}

\begin{cor} Let $G$ be an FSIN group which is of finite exponent. Then $G$
is SIN. 
\end{cor}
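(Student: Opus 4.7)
The plan is to derive Corollary 3.7 as an immediate instance of either Corollary 3.5 or Proposition 3.6; the finite exponent hypothesis trivializes the condition on the power map that each of those earlier results requires.

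By definition of finite exponent, there exists $p\geq 1$ with $g^p=e$ for every $g\in G$. If $p=1$ then $G$ is trivial and hence SIN, so one may assume $p\geq 2$. For such a $p$, the power map $\phi_p: G\to G$, $g\mapsto g^p$, is identically equal to $e$, and a constant map is trivially uniformly continuous with respect to any uniform structure on its source. In particular, $\phi_p$ is left uniformly continuous on all of $G$, which is certainly a nonempty open subset of $G$. Since $G$ is assumed FSIN, Corollary 3.5 applies with this value of $p$ and yields that $G$ is SIN.

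One could equivalently invoke Proposition 3.6: for the same $p$, the sequence $(g^{pn})_{n\in\mathbb{N}}$ is constantly $e$ for every $g\in G$, hence converges to $e$ in $G$, and the FSIN hypothesis then gives SIN directly. There is essentially no obstacle to overcome, since the substantive work has been absorbed into the earlier results; the only point to verify is the elementary observation that a constant map is uniformly continuous, together with the reduction of the trivial case $p=1$.
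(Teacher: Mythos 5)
Your proof is correct and matches the paper's own argument: the paper likewise observes that Corollary 3.7 "follows from both Corollary 3.5 and Proposition 3.6," exactly as you do, with the constancy of $\phi_p$ (or of the sequence $(g^{pn})$) making the hypotheses trivially satisfied. Your explicit handling of the degenerate case $p=1$ is a harmless extra detail (one could also just replace $p$ by $2p$).
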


Clearly, Corollary 3.7 follows from both   Corollary 3.5 and
Proposition 3.6. It can be also deduced 
from the following result for which we will give a direct proof (based on Lemma 2.2).\par 
 For  $g\in G$, the left translation $l_g: G\to G$ is defined by
$l_g(h)=gh$.
For a given $A\subset G$, it is well known (and easy to check) that $A$ is left
thin in $G$ if
and only if the set $L(A)=\{l_g: g\in A\}$ is equicontinuous (at the unit $e$),
as a set of maps from the space $G$ to the uniform space $(G,{\mathcal U}_r)$.
In particular, $G$ is SIN if and only if $L(G)$ is equicontinuous. 
If $G$ is  FSIN, it suffices to suppose that for some $p\geq 1$, the
set $L(\{g^p:g\in G\})$ is equicontinuous, as it is clear from the next
statement.

\begin{proposition} Suppose that there is $p\geq 1$ such that the
set $\{g^p: g\in G\}$ is left thin in $G$. If every
Roelcke-discrete subset of $G$ is left thin in $G$, then $G$ is SIN.
\end{proposition}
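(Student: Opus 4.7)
The plan is to imitate the pattern of Propositions 3.1 and 3.6. The case $p = 1$ is the definition of SIN, so I assume $p \geq 2$. Fix $V \in \mathcal{N}(e)$ and apply Lemma 2.2 with $A = G$ and $n = 2p+2$ to produce a set $M \subset G$ together with a decreasing sequence $(V_1 = V, V_2, \ldots, V_n)$ satisfying (a) and (b). The strategy is to verify the hypothesis of Lemma 2.1 so as to conclude that $M$ is right $V$-thin in $G$; then Lemma 2.2(b) will force $G = A$ to be right $V^3$-thin, and letting $V$ shrink over $\mathcal{N}(e)$ will yield that $G$ is right thin, hence SIN.

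The key algebraic identity is the telescoping product
\[
(mu)^p m^{-p} = \prod_{k=1}^{p}(m^k u m^{-k}),
\]
which is a routine induction. Setting $u_k = m^k u m^{-k}$, this expresses $(mu)^p m^{-p}$ as an ordered product of conjugates of $u$ by the positive powers of $m$. Iterated application of property (a) gives $u_k \in V_{n-k}$ whenever $u \in V_n$ and $m \in M$; then, using repeatedly the nesting $V_{j+1}^2 \subset V_j$, one collapses $V_{n-1}V_{n-2}\cdots V_{n-p}$ into $V_{n-1-p}$, which equals $V_{p+1}$ for the chosen $n = 2p+2$.

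The delicate point is that Lemma 2.1 demands $m^{-p}(mu)^p \in V_p$, and this quantity differs from $(mu)^p m^{-p}$ by conjugation by $m^p$. Here the left thinness of $\{g^p\}$ intervenes. Since $\{g^p\} = \{g^{-p}\}$, the set
\[
V'_p := \bigcap_{g \in G} g^p V_p g^{-p}
\]
is a neighborhood of $e$, and by construction $m^{-p} V'_p m^p \subset V_p$ for every $m \in G$. I will arrange in the Lemma 2.2 construction that $V_{p+1} \subset V'_p$; this is legitimate because $V'_p$ is determined once $V_p$ has been chosen, and the arbitrary-smallness clause of Lemma 2.2 allows $V_{p+1}$ to be shrunk further than needed. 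Taking a symmetric neighborhood $U \subset V_n$, one then has $(mu)^p m^{-p} \in V_{p+1} \subset V'_p$ for $m \in M$ and $u \in U$, and conjugating by $m^{-p}$ gives $m^{-p}(mu)^p \in m^{-p} V'_p m^p \subset V_p$, exactly the hypothesis of Lemma 2.1.

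The main obstacle is precisely this conjugation transfer: property (a) of Lemma 2.2 controls only positive conjugation by $m$, so the naturally bounded object is $(mu)^p m^{-p}$, whereas Lemma 2.1 requires control of $m^{-p}(mu)^p$. The left thinness of $\{g^p\}$ provides exactly the uniformity over $m \in G$ needed to bridge the two. Once this bridge is in place, Lemma 2.1 applied with the sub-sequence $(V_1, \ldots, V_p)$ produces $M$ right $V$-thin, Lemma 2.2(b) lifts this to $G$ right $V^3$-thin, and the conclusion follows by letting $V$ range over $\mathcal{N}(e)$.
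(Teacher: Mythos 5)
Your proof is correct, but it takes a genuinely different route from the paper's. The paper's own argument is much more direct: it applies Lemma 2.2 with $n=p$, uses the left thinness of $\{g^{p}:g\in G\}$ to pick $U$ with $m^{-p}Um^{p}\subset V_{p}$, and then, for a \emph{single} element $u\in U$, walks back up the chain via property (a), $m^{-k}um^{k}\in mV_{k+1}m^{-1}\subset V_{k}$ for $k=p-1,\dots,1$, landing at $um\in mV$; Lemma 2.1 and the power map never appear. You instead force the statement into the mold of Lemma 2.1: you expand $(mu)^{p}m^{-p}=\prod_{k=1}^{p}m^{k}um^{-k}$, push each conjugate down the chain with (a), collapse the product (which is why you need the longer chain $n=2p+2$ and the extra nesting $V_{j+1}^{2}\subset V_{j}$ — both legitimately available from the ``arbitrarily small'' clause of Lemma 2.2), and then use the thinness of the $p$-th powers a second time to transfer from $(mu)^{p}m^{-p}$ to $m^{-p}(mu)^{p}$. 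All of these steps check out, including the observation that $\{g^{p}\}=\{g^{-p}\}$ and the requirement $V_{p+1}\subset V_{p}'$, which can indeed be imposed since $V_{p}'$ depends only on $V_{p}$. What the paper's route buys is economy: thinness of $\{g^{p}\}$ controls conjugation of $u$ itself by $m^{\pm p}$, so there is no need to manufacture the quantity $m^{-p}(mu)^{p}$ at all. What your route buys is a unified picture: it exhibits Proposition 3.8 as another instance of the Lemma 2.1/Lemma 2.2 mechanism used for Theorem 3.4, in the spirit of the paper's concluding comment relating both results to equicontinuity of translates of the power maps.
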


\begin{proof} {\rm We may suppose that $p\geq 2$. For $V\in{\mathcal N}(e)$  and
$A=G$, choose  $(V_1=V,V_2,\ldots, V_p)$ and $M\subset G$ as
in Lemma 2.2. In view of Lemma 2.2(b), to conclude, it suffices  to verify that
$M$
is right $V$-thin in $G$. Take $U\in{\mathcal N}(e)$  such
that $g^{-p}Ug^p\subset V_p$  for every $g\in G$. Let $u\in U$ and $m\in M$;
starting from $m^{-(p-1)}um^{p-1}\in mV_pm^{-1}\subset V_{p-1}$ and continuing
this
process, we arrive at $m^{-1}um\in  V_1$, that is, $um\in mV$.} 
\end{proof}
 
 Recall
that the group $G$ is called periodic (or a torsion group) if every element of
$G$ is of finite  order. 
\begin{cor} Suppose that $G$ is FSIN, periodic and a Baire space. Then $G$
is SIN.
\end{cor}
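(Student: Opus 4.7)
The plan is to combine periodicity with the Baire property to produce a nonempty open set on which the power map is trivially left uniformly continuous, and then invoke Corollary 3.5.

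First, since $G$ is periodic, we have $G=\bigcup_{n\geq 1}A_n$, where $A_n=\{g\in G:g^n=e\}=\phi_n^{-1}(\{e\})$. The power map $\phi_n$ is continuous and $G$ is Hausdorff, so each $A_n$ is closed in $G$. Because $G$ is a Baire space, at least one $A_N$ must have nonempty interior. Since $g^n=e$ implies $g^{kn}=e$ for every positive integer $k$, we have $A_n\subset A_{kn}$, and in particular $A_{2N}$ also has nonempty interior; so we may assume from the outset that $N\geq 2$.

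Next, I would fix a nonempty open set $U\subset A_N$. On $U$, the map $\phi_N$ is identically $e$, so for all $x,y\in U$ we have $\phi_N(x)^{-1}\phi_N(y)=e$, which lies in every neighborhood of $e$. Hence $\phi_N\colon U\to G$ is (trivially) left uniformly continuous. Since $G$ is FSIN and $p=N\geq 2$ is such that $\phi_p$ is left uniformly continuous on a nonempty open subset of $G$, Corollary 3.5 applies and gives that $G$ is SIN.

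The argument presents no real obstacle: the only substantive step is recognizing that periodicity provides a natural covering of $G$ by the closed sets $A_n$ to which the Baire property can be applied. Everything else reduces to the observation that constant maps are automatically uniformly continuous, together with an appeal to the previously established Corollary 3.5.
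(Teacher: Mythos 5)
Your proof is correct and follows exactly the paper's route: the paper's own argument is precisely the "standard Baire category argument" you spell out (covering $G$ by the closed sets $A_n=\{g:g^n=e\}$, extracting one with nonempty interior, noting $\phi_N$ is constant there) followed by an appeal to Corollary 3.5. You have simply made explicit the details the paper leaves to the reader, including the harmless point that one may assume $N\geq 2$.
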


\begin{proof} {\rm A standard Baire category argument gives  a nonempty
open
subset $O$ of $G$ and $p\geq 2$ such that $x^p=e$ for every $x\in O$.
Hence Corollary 3.5 applies.} 
\end{proof}

\begin{remark} {\rm Corollary 3.7 remains true if   Roelcke-discrete
subsets of $G$ are left thin (without assuming that $G$ is FSIN). This is of
course the
case 
when the Roelcke uniformity of $G$ is precompact. As a consequence, we obtain
the following statement: Every
topological group which is Roelcke precompact and of finite exponent
is  precompact (equivalently, a SIN group). Recall that Roelcke precompact
periodic groups need not be precompact (consider the group of finitely
supported permutations of an infinite set).}
\end{remark}

Now, for the convenience 
of the reader, we cite  an example of a 
topological group
of finite
exponent which is not SIN. Let $S$ and $A$ be two non trivial groups, with $A$ infinite, 
and consider the group $H=S^A$ with the pointwise product. The map $\eta: A\to
{\rm
Aut}(H)$ defined
by $\eta(a)(h)(b)=h(ba)$, $a,b\in A$, $h\in H$ is an homomorphism, where ${\rm
Aut}(H)$ stands for the 
automorphisms group of $H$ with the composition law $(f,g)\to f\circ g$. Let
$G=H\times_\eta A$ be the semi-direct
product group associated to $(A,H,\eta)$, topologized as follows: $A$ is
discrete and $H$ is equipped with the product topology, the group $S$ being
discrete.
Then,  $G$ is not SIN; indeed, the set $\{e\}\times A$ is neither left nor right
thin in $G$. If $S={\mathbb Z}_2$ and the group $A$ is  of
exponent $2$, then $G$ is of exponent $4$.\par 
\vskip 2mm

\noindent{\it A concluding comment.}
The statements of Theorem 3.4 and Proposition 3.8 are different,
 although they have some common consequences (e.g. Corollary 3.7). In fact, in a
way, they are complementary and  we propose the following
discussion to explain that.
In general,  a map $f:
(G,{\mathcal U}_l)\to
(X,{\mathcal U})$ (where $(X,{\mathcal U})$ is a uniform
space) is uniformly continuous if and only if the set $\{f_g: g\in G\}$
of left translations of $f$ is equicontinuous (at $e$). Thus,
the power map $\phi_p: (G,{\mathcal U}_l)\to (G,{\mathcal U}_l)$ is 
uniformly continuous
if and only if the set $\{(\phi_p)_g: g\in G\}$ of all left translations
of $\phi_p$ is left
equicontinuous
(i.e., when $G$
is equipped with the left uniformity). Taking in Lemma 2.1 (via Lemma 2.2)
a sequence $(V_1,\ldots, V_q)$ with an appropriate length,  
it is
quite possible to weaken the assumption in Theorem 3.4 assuming only that
the 
 set $\{(\phi_p)_{g^q}: g\in G\}$ is left
equicontinuous
for some $p\geq 2$ and $q\geq 1$.
 On the other hand, the
left thinness
of the set $A$
in Proposition 3.8  means that the set $\{(\phi_1)_{g^p}: g\in G\}$ is
right equicontinuous (i.e., when $G$ is equipped with the right uniformity).
It is again  possible here to 
assume only that the set $\{(\phi_p)_{g^q}: g\in G\}$ is
{\it right} equicontinuous for some $p\geq 1$ and $q\geq 1$.
\vskip 2mm
\noindent{\bf Acknowledgement.} The authors would like to thank the referee for
her/his valuable remarks and suggestions.

\end{document}